\documentclass[12pt]{amsart}
\usepackage{amssymb}
\usepackage{tikz-cd}
\usepackage{amsrefs}
\usepackage{xfrac}
\usepackage{thmtools}
\usepackage{thm-restate}

\title[A sphere obstruction for the Hilbert shift quotient]{A sphere obstruction for the\\ Hilbert shift quotient}

\author {Avik Das}
\author {Christian Rosendal}
\address{Department of Mathematics\\University of Maryland\\4176 Campus Drive - William E. Kirwan Hall\\College Park, MD 20742-4015\\USA}
\email{adas1236@terpmail.umd.edu}
\email{rosendal@umd.edu}
\urladdr{sites.google.com/view/christian-rosendal/}

\newcommand{\norm}[1]{\lVert#1\rVert}

\newcommand{\forkindep}[1][]{\mathop{\mathop{\vcenter{\hbox{\oalign{\noalign{\kern-.3ex}
\hfil$\vert$\hfil\cr\noalign{\kern-.7ex}$\smile$\cr\noalign{\kern-.3ex}}}}}\displaylimits_{#1}}}

\newcommand{\conv}[2]{\mathop{\underset{#2}{\overset{#1}\longrightarrow}}}

\newcommand{\maps}[1]{\mathop{\overset{#1}\longrightarrow}}

\newcommand{\acts}[1]{\mathop{\overset{#1}\curvearrowright}}

\newcommand{\Mgd}[2]{\big\{  {#1}\;\big|\; {#2} \big\} }

\usetikzlibrary{decorations.markings}
\tikzset{negated/.style={
        decoration={markings,
            mark= at position 0.5 with {
                \node[transform shape] (tempnode) {$\backslash$};
            }
        },
        postaction={decorate}
    }
}

\newcommand {\Z}{\mathbb Z}

\newcommand {\R}{\mathbb R}
\newcommand {\C}{\mathbb C}
\newcommand {\T}{\mathbb T}

\newcommand{\eps}{\epsilon}

\newcommand {\ku} {\mathcal}

\theoremstyle{plain}
\newtheorem{thm}{Theorem}[section]
\newtheorem*{theorem*}{Theorem}
\newtheorem{cor}[thm]{Corollary}
\newtheorem{lemme}[thm]{Lemma}
\newtheorem{prop} [thm] {Proposition}

\theoremstyle{definition}

\newtheorem{rem}[thm]{Remark}

\definecolor{groen}{rgb}{0,0.5,.7}
\definecolor{gul}{rgb}{0.94,0.8,0}
\definecolor{blaa}{rgb}{0.16,0,0.6}
\definecolor{roed}{rgb}{1,0,0}

\makeindex

\newcommand{\ip}[2]{\langle #1,#2\rangle}
\newcommand{\Sphere}{\mathbb S}
\DeclareMathOperator{\supp}{supp}

\thanks{The authors' research was partially supported by the U.S. National Science Foundation under Grant Number DMS-2246986.}

\begin{document}

\begin{abstract}
Let $\mathcal H=\ell^2(\Z)$ and let $\Z$ act by bilateral shifts.
We prove that the quotient $\Sphere(\mathcal H)/\Z$ of the unit sphere,
equipped with its quotient metric, has infinite Hilbert distortion.
Consequently, $\mathcal H/\Z$ does not admit a bilipschitz embedding
into Hilbert space, answering a question of Cahill, Iverson, and Mixon.

The proof first embeds every flat torus $\R^n/L$, after rescaling,
into the quotient $\Sphere\big(\ell^2(\Z^n)\big)/\Z^n$ with distortion
arbitrarily close to $1$, where $\Z^n$ acts by translations.
We then show that every finite subset of this quotient embeds into
$\Sphere(\mathcal H)/\Z$ with distortion arbitrarily close to $1$.
The lower bounds of Khot and Naor for the Hilbert distortion of
flat tori then imply the result.
\end{abstract}

\maketitle


\section{Introduction}
A basic problem in bilipschitz invariant theory is to determine, for a real
Hilbert space $\mathcal H$ and a group $G\leqslant O(\mathcal H)$, whether the metric quotient
$\mathcal H/G$ admits a bilipschitz embedding into a Hilbert space. Cahill, Iverson, and
Mixon formulated this program in \cite{CahillIversonMixon}. Their motivation
comes from stable representations of data that are observed only modulo a group
of symmetries, including relabelings of graphs, permutations of point clouds,
rotations of landmarks, and time shifts of signals.

For the bilateral shift on $\ell^2(\Z;\R)$, they asked whether the quotient
$\ell^2(\Z;\R)/\Z$ admits a bilipschitz embedding into Hilbert space and, if so,
for its exact Euclidean distortion \cite{CahillIversonMixon}*{Problem 48}. Their
finite-dimensional computation gave the lower bound $1.3$. Subsequently,
Blum-Smith, Derksen, Mixon, Qaddura, and Vose noted that it remained open whether
the distortion of this quotient is infinite \cite{BlumSmithEtAl}*{Section 1.2.5}.
We prove that even the quotient of the unit sphere has infinite Hilbert
distortion.

For a metric space $(X,d_X)$, let $c_2(X)$ be the infimum of all $D\geqslant 1$
for which there are a real Hilbert space $\mathcal H$ and a map $X\maps\phi\mathcal H$ such that
\begin{equation}\label{eq:distortion-definition}
d_X(x,y)\leqslant \norm{\phi(x)-\phi(y)}
 \leqslant D d_X(x,y)
\end{equation}
for all $x,y\in X$. We set $c_2(X)=\infty$ when no such $D$ exists.

Set $\mathcal H=\ell^2(\Z;\R)$, let $\Sphere(\mathcal H)$ denote its unit sphere,  and let $\Z$ act by bilateral shifts on $\ku H$, that is, 
\[ 
(\tau_n f)(m)=f(m-n)
 \]
for $m,n\in \Z$. The quotient pseudometric on $\Sphere(\mathcal H)/\Z$ is a metric as shown in Section \ref{sec:background}.

\begin{thm}\label{thm:main}
For the bilateral shift action of $\Z$ on $\mathcal H=\ell^2(\Z;\R)$,
\[
        c_2\big(\Sphere(\mathcal H)/\Z\big)=\infty.
\]
In particular, neither $\Sphere(\mathcal H)/\Z$ nor $\mathcal H/\Z$
admits a bilipschitz embedding into Hilbert space.
\end{thm}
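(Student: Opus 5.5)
The plan follows the strategy outlined in the abstract: transfer the Khot--Naor lower bounds for flat tori into $\Sphere(\mathcal H)/\Z$ through two nearly isometric embeddings. We use the Khot--Naor theorem in the following form: there are lattices $L_n\subseteq\R^n$ with $c_2(\R^n/L_n)\geqslant c\sqrt n$ (in fact already $c_2$ of suitable finite subsets of $\R^n/L_n$ tends to infinity). Since $c_2$ is determined by finite subsets and is invariant under rescaling the metric, it suffices to show the following: for every $n$, every finite $F\subseteq\R^n/L_n$ and every $\eps>0$, some rescaled copy of $F$ embeds into $\Sphere(\mathcal H)/\Z$ with distortion at most $1+\eps$. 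Then $c_2(\Sphere(\mathcal H)/\Z)\geqslant c_2(F)/(1+\eps)$, which is unbounded. The statement about $\mathcal H/\Z$ follows because $\Sphere(\mathcal H)/\Z$ sits inside $\mathcal H/\Z$ as a subset. The shift preserves norms, and the quotient metric of the sphere is comparable to the restriction of the metric of $\mathcal H/\Z$: on the unit sphere, chordal distance and geodesic distance are within a factor $\pi/2$ of each other.

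\textbf{Step 1: tori into $\Sphere(\ell^2(\Z^n))/\Z^n$.} Write $L=A\Z^n$. A point $x\in\R^n/L$ corresponds to a point $A^{-1}x\in\R^n/\Z^n$. To it we associate the normalized sampled Gaussian bump
\[
f_x(k)=c_\sigma\exp\bigl(-|A(k-A^{-1}x)|^2/\sigma^2\bigr),\qquad k\in\Z^n,
\]
with $\sigma$ large. Integer translates of $f_x$ are exactly the functions $f_{x+\ell}$ with $\ell\in L$, so the map is well defined on the quotient. For large $\sigma$, sums over the lattice approximate integrals uniformly, by Poisson summation. Hence $\ip{f_x}{f_y}\approx\exp(-|x-y|^2/(2\sigma^2))$, and the quotient distance is
\[
\min_{\ell\in L}\norm{f_x-f_{y+\ell}}\approx\sqrt{2-2e^{-d(x,y)^2/2\sigma^2}}.
\]
After rescaling by $\sigma$, this is $(1+o(1))\,d(x,y)$ uniformly on pairs in the compact torus. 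The regime $d\ll\sigma^{-1}$ is where the Poisson errors must be controlled relative to $d$; here one uses that the map is smooth, with derivative approximately $\sigma^{-1}$ times an isometry. This step is routine but requires care with the relative errors.

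\textbf{Step 2: $\Z^n$-quotient into $\Z$-quotient.} Given a finite set of finitely supported functions $g_i$ on $\Z^n$, approximating within $\eps$, we choose a huge $N$ and encode $\Z^n\hookrightarrow\Z$ via $k\mapsto k_1+Nk_2+\dots+N^{n-1}k_n$. Define $G_i$ on $\Z$ by pushing forward $g_i$, where the supports lie in a box of side much smaller than $N$. A $\Z^n$-translation of $g_i$ becomes a $\Z$-shift of $G_i$, so quotient distances cannot increase. The key issue, and the \textbf{main obstacle}, is the reverse inequality. A $\Z$-shift $m$ that does not come from a lattice vector near the supports could mix coordinates, or produce partial overlaps through carries. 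Since the supports have diameter much smaller than $N$, any shift $m$ producing nonzero overlap $\ip{G_i}{\tau_mG_j}$ decomposes uniquely as the image of a vector in $\Z^n$, because base-$N$ digits of small differences are determined without carries. Thus $\ip{G_i}{\tau_m G_j}=\ip{g_i}{\tau_v g_j}$ for the corresponding $v$. Shifts with zero overlap give distance $\sqrt2$, which is at least any distance realized in the $\Z^n$-quotient between nearby points. More precisely, zero overlap corresponds to $v$ giving zero overlap in $\Z^n$ as well. Hence the quotient distances coincide exactly for the truncated functions. Truncation and renormalization cost only $\eps$-distortion because $F$ is finite and the distances are bounded below. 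Combining Step 1, Step 2, and the Khot--Naor bound completes the proof.
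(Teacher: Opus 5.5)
Your argument is correct and follows the paper's architecture. You use a near-isometric rescaled embedding of the flat torus into $\Sphere(\ell^2(\Z^n))/\Z^n$, then the mixed-radix coding of $\Z^n$ into $\Z$, then finite determination and Khot--Naor. Your Step 2 is essentially Proposition~\ref{prop:finite-coding}, including exactness for finitely supported representatives and the approximation step. The real difference is the kernel in Step 1. The paper samples a band-limited function, $F(x)(v)=|P|^{-1/2}\int_P e^{2\pi i\ip{v-x}{\xi}}\,d\xi$ with $P\subseteq[-\tfrac12,\tfrac12)^n$. Sampling on $\Z^n$ is then exactly isometric by Parseval, so one gets the identity $\norm{F(x)-F(y)}_2^2=2\bigl(1-\Phi(A(x-y)/R)\bigr)$ with no error term, and everything reduces to the elementary estimate of Lemma~\ref{lem:cube-kernel}. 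Your Gaussian is not band-limited, so you pay with aliasing errors. These must be controlled relative to $d^2/\sigma^2$ at small scales, which you only sketch. This can be done cleanly. The normalization must depend on $x$: put $\Theta(z)=\sum_{k\in\Z^n}e^{-2\norm{Ak-z}_2^2/\sigma^2}$, which is $A\Z^n$-periodic, and set $f_x(k)=\Theta(x)^{-1/2}e^{-\norm{Ak-x}_2^2/\sigma^2}$, so equivariance survives. Completing the square gives exactly
\[
\ip{f_x}{f_y}=e^{-\norm{x-y}_2^2/(2\sigma^2)}\,\frac{\Theta\bigl(\frac{x+y}2\bigr)}{\sqrt{\Theta(x)\Theta(y)}}.
\]
A second-order Taylor bound for $\log\Theta$ places the last factor between $\exp\bigl(-\eta\norm{x-y}_2^2\bigr)$ and $\exp\bigl(\eta\norm{x-y}_2^2\bigr)$, where $\eta=\frac18\sup\norm{\nabla^2\log\Theta}$. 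Poisson summation shows $\sigma^2\eta\to0$ as $\sigma\to\infty$. This controls all pairs and all lattice translates $y+\ell$ simultaneously, with no separate derivative argument. So your route works at the cost of this extra analysis, whereas the paper's choice of kernel buys an exact formula. Your chordal-versus-geodesic remark is unnecessary: in the paper $\Sphere(\mathcal H)/\Z$ carries the restriction of $d_1$, so the claim for $\mathcal H/\Z$ follows simply by passing to a subspace.
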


The proof yields two embedding results that may be of independent interest.
First, every flat $n$-torus admits, after rescaling, a bilipschitz embedding
into $\Sphere(\mathcal H_n)/\Z^n$, where $\mathcal H_n=\ell^2(\Z^n;\R)$,
with distortion arbitrarily close to $1$.
Theorem~\ref{thm:flat-tori-in-translation-quotient} constructs the
embedding directly from an equivariant family of real sequences. Their
coordinates are Fourier coefficients of functions supported near zero, and
Parseval's identity reduces the distance estimates to an elementary integral
estimate.

Second, every finite subset of $\Sphere(\mathcal H_n)/\Z^n$ embeds into
$\Sphere(\mathcal H_1)/\Z$ with distortion arbitrarily close to $1$.
For finitely supported representatives, a finite coding of coordinates
preserves the quotient distances exactly; approximation gives the general
case in Proposition~\ref{prop:finite-coding}. Consequently, every finite subset of
every flat torus can be represented, after rescaling and with arbitrarily
small bilipschitz error, inside the quotient by the bilateral shift.
Finite determination of Hilbert distortion and the lower bounds of Khot and
Naor for flat tori then imply Theorem~\ref{thm:main}.


\section{Translation quotients and flat tori}\label{sec:background}
All Hilbert spaces are real unless explicitly stated otherwise. For
$n\geqslant 1$, set
\[
        \mathcal H_n=\ell^2(\Z^n;\R),
\]
and let $\Z^n\acts\tau \mathcal H_n$ be the translation action defined by
\begin{equation}\label{eq:translation-action}
        (\tau_u f)(v)=f(v-u) 
\end{equation}
for $u,v\in \Z^n$. 
We note that if $(u_m)$ is an injective sequence in $\Z^n$ and $f\in \mathcal H_n$, then
$\tau_{u_m}f\conv{w}{m\to\infty}0$. It follows that every $\Z^n$-orbit in $\mathcal H_n$ is closed.
Therefore, the quotient pseudometric 
\begin{equation}\label{eq:sigma-metric}
 d_n([f],[g])=\inf_{u\in\Z^n}\norm{f-\tau_u g}
\end{equation}
on $\mathcal H_n/\Z^n$ is an actual metric and  ${\Sphere(\mathcal H_n)/\Z^n}$ a metric subspace of $\mathcal H_n/\Z^n$. Moreover, for
$f,g\in\Sphere(\mathcal H_n)$,
\begin{equation}\label{eq:sphere-correlation}
 d_n([f],[g])^2
 =2-2\sup_{u\in\Z^n}\ip{f}{\tau_u g}.
\end{equation}

We shall use the standard finite-determination principle
\begin{equation}\label{eq:finite-determination}
 c_2(X)=\sup\Mgd{c_2(F)}{F\subseteq X\text{ is finite}},
\end{equation}
which follows from a Hilbert-space ultraproduct argument; see
\cite{CahillIversonMixon}*{Proposition 31}.

A {\em full-rank lattice} in $\R^n$ is a subgroup of the form $L=A\Z^n$ for some
$A\in GL_n(\R)$. A \emph{flat torus} is a quotient $\R^n/L$, where $L\subseteq\R^n$ is a
full-rank lattice, equipped with the Euclidean quotient metric
\[
 d_L(x+L,y+L)=\min_{z\in L}\norm{x-y-z}_2.
\]
It is convenient to use coordinates in which the lattice is $\Z^n$. For this, set $\T^n=\R^n/\Z^n$ 
and let
\[
        \pi\colon\R^n\longrightarrow\T^n,
        \qquad \pi(x)=x+\Z^n,
\]
be the quotient map. For $A\in GL_n(\R)$, let $T_A$ denote the set $\T^n$
equipped with the metric
\begin{equation}\label{eq:flat-metric}
 \rho_A\big(\pi(x),\pi(y)\big)
 =\min_{u\in\Z^n}\norm{A(x-y-u)}_2
 \qquad(x,y\in\R^n).
\end{equation}
The value on the right is unchanged if either $x$ or $y$ is replaced by a
translate in $\Z^n$, so the formula is well defined. Moreover,
\[
        \pi(x)\longmapsto Ax+A\Z^n
\]
is an isometry from $T_A$ onto $\R^n/A\Z^n$. Thus every flat torus is isometric
to some $T_A$; the matrix $A$ determines its metric, not its underlying group.

The principal external input in the proof is the following consequence of the work
of Khot and Naor.

\begin{thm}[Khot--Naor]\label{thm:khot-naor}
There is a constant $c>0$ such that, for arbitrarily large $n$, one can find a
matrix $A\in GL_n(\R)$ satisfying
\begin{equation}\label{eq:khot-naor}
        c_2(T_A)\geqslant c\sqrt n.
\end{equation}
\end{thm}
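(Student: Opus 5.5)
This theorem is imported from Khot and Naor rather than proved from scratch, so my plan is to extract it from their results on the Euclidean distortion of flat tori. The key input is their theorem that for a lattice $L\subseteq\R^n$, $c_2(\R^n/L)$ is bounded below by a quantity comparing the covering radius of $L$ with the length of its shortest nonzero vector. More precisely, the bound is of the form
\[
c_2(\R^n/L)\gtrsim \frac{\sqrt n\,\lambda_1(L^*)\,\mu(L)}{?}\quad\text{or}\quad c_2(\R^n/L)\gtrsim \frac{\mu(L)}{\lambda_1(L)}\cdot\frac{1}{\sqrt n}\ \text{-type bounds},
\]
where $\mu(L)$ is the covering radius. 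This estimate is proved via Fourier analysis on the torus, a Poincaré-type inequality comparing average distances of antipodal-like pairs with nearest-neighbour distances. Khot and Naor then exhibit lattices, random or Minkowski–Hlawka-type ones normalized to covolume $1$, for which the two quantities differ by a factor of order $\sqrt n$, or more precisely order $n$ against $\sqrt n$. This gives $c_2(\R^n/L)\geqslant c\sqrt n$ for all large $n$.

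The steps I would carry out are as follows. First, recall from Khot–Naor the existence, for every large $n$, of a full-rank lattice $L_n=A_n\Z^n$ with $c_2(\R^n/L_n)\geqslant c\sqrt n$. The constant $c$ is absolute, and the scaling of $A_n$ does not matter since distortion is scale invariant. Second, use the isometry $\pi(x)\mapsto A_nx+A_n\Z^n$ from $T_{A_n}$ onto $\R^n/A_n\Z^n$ noted above. Since $c_2$ is an isometry invariant, this transfers the bound to $c_2(T_{A_n})\geqslant c\sqrt n$.

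The only real point of care is the first step. One must check that the Khot–Naor statement, usually phrased as the existence of $n$-dimensional flat tori with Euclidean distortion $\Omega(\sqrt n)$, matches the normalization of $c_2$ used here. In particular, the definition \eqref{eq:distortion-definition} allows arbitrary Hilbert targets, whereas Khot–Naor work with $L_2$. These agree, since every separable Hilbert space is isometric to a subspace of $L_2$, and a compact $T_A$ only needs a separable target. Nothing beyond this citation and the isometry is needed. If one wanted a self-contained proof, the main obstacle would be reproducing their Fourier-analytic lower bound on the torus, which is the genuinely hard part and which I would not redo here.
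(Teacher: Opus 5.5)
Your proposal is correct and follows the paper's own argument: the paper likewise cites the Khot--Naor existence of lattices $L=A\Z^n$ with $c_2(\R^n/L)\geqslant c\sqrt n$, and transfers the bound to $T_A$ through the isometry $\pi(x)\mapsto Ax+A\Z^n$. Your recollection of the underlying estimate is garbled, though it plays no role in your proof; the Khot--Naor bound is $c_2(\R^n/L)\gtrsim \lambda_1(L^*)\mu(L)/\sqrt n$, which involves the shortest vector of the \emph{dual} lattice, and lattices with $\lambda_1(L^*)\mu(L)\gtrsim n$ then give the $\sqrt n$ lower bound.
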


This formulation follows from the existence of lattices $L\subseteq\R^n$
satisfying
\[
        c_2(\R^n/L)\geqslant c\sqrt n,
\]
since every full-rank lattice has the form $L=A\Z^n$. See Theorem 1.1 and the paragraph
following it in \cite{HavivRegev}, where the result is attributed to Khot and
Naor. The original argument appears in Theorem 5.1 and Corollary 5.2 of
\cite{KhotNaor}.

For comparison, Agarwal, Regev, and Tang \cite{AgarwalRegevTang} proved the
upper bound $c_2(T_A)=O(\sqrt{n\log n})$, uniformly over $A\in GL_n(\R)$.
Thus the lower bound in Theorem~\ref{thm:khot-naor} is sharp up to a factor
of order $\sqrt{\log n}$.


\section{Flat tori in translation quotients}

We begin with an estimate that will control distances in the translation
quotient. Its quadratic bounds near the origin will give the desired metric
comparison, while its positive lower bound away from the origin will prevent
distant lattice translates from making quotient distances too small.

\begin{lemme}\label{lem:cube-kernel}
Let $n\geqslant1$, put $Q=[-\sqrt3,\sqrt3]^n$, and define
$\Phi\colon\R^n\to\R$ by
\begin{equation}\label{eq:Phi-definition}
        \Phi(z)=\frac1{|Q|}\int_Q\cos\ip{z}{\xi}\,d\xi.
\end{equation}
For every $\eps>0$, there is a constant $c>0$ such that
\begin{equation}\label{eq:kernel-bounds}
 \min\left\{\frac{\norm{z}_2^2}{(1+\eps)^2},c\right\}
 \leqslant2(1-\Phi(z))\leqslant\norm{z}_2^2
 \qquad(z\in\R^n).
\end{equation}
\end{lemme}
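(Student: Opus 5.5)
Since $Q$ is a product of intervals, $\Phi$ factors: $\Phi(z)=\prod_{j=1}^n\varphi(z_j)$ with $\varphi(t)=\frac{\sin(\sqrt3 t)}{\sqrt3 t}$ (and $\varphi(0)=1$). The normalization $\sqrt3$ makes the uniform distribution on $[-\sqrt3,\sqrt3]$ have variance $1$. Equivalently, $\Phi(z)=\mathbb E\cos\ip{z}{\xi}$ for $\xi$ uniform on $Q$, with $\mathbb E\,\xi_i\xi_j=\delta_{ij}$. The plan is to prove the upper bound globally from $1-\cos s\leqslant s^2/2$, the lower bound near the origin from a Taylor expansion with explicit error, and the lower bound away from the origin from a compactness/decay argument.

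\emph{Upper bound.} Using $1-\cos s\leqslant s^2/2$ and averaging,
\[
2(1-\Phi(z))=\frac{2}{|Q|}\int_Q\big(1-\cos\ip z\xi\big)\,d\xi\leqslant\frac{1}{|Q|}\int_Q\ip z\xi^2\,d\xi=\norm z_2^2 ,
\]
because the coordinates are centered and uncorrelated with unit variance.

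\emph{Lower bound near $0$.} Use $1-\cos s\geqslant s^2/2-s^4/24$. This gives
\[
2(1-\Phi(z))\geqslant\norm z_2^2-\tfrac1{12}\mathbb E\ip z\xi^4 .
\]
The coordinates are independent and symmetric, with $\mathbb E\xi_j^4=9/5$. Hence
\[
\mathbb E\ip z\xi^4=\sum_j z_j^4\cdot\tfrac95+3\sum_{i\neq j}z_i^2z_j^2\leqslant3\norm z_2^4 .
\]
Therefore $2(1-\Phi(z))\geqslant\norm z_2^2(1-\norm z_2^2/4)$, with a bound uniform in $n$, though uniformity is not even needed. Choose $r>0$ with $1-r^2/4\geqslant(1+\eps)^{-2}$. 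Then the lower bound holds with the first term of the minimum whenever $\norm z_2\leqslant r$.

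\emph{Lower bound away from $0$.} This is the main step, since $\R^n$ is noncompact. We need $\inf_{\norm z_2\geqslant r}(1-\Phi(z))>0$. Two facts suffice.

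First, $\Phi(z)<1$ for $z\neq0$. Indeed $\cos\ip z\xi<1$ on a set of positive measure, because $\ip z\xi$ ranges over a nondegenerate interval.

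Second, $\Phi(z)\to0$ as $\norm z_2\to\infty$. This holds because $|\varphi(t)|\leqslant\min\{1,1/(\sqrt3|t|)\}$ and some coordinate satisfies $|z_j|\geqslant\norm z_2/\sqrt n$. (Alternatively, cite the Riemann–Lebesgue lemma.)

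Now pick $R$ with $\Phi\leqslant1/2$ outside the ball of radius $R$. By continuity and compactness, $1-\Phi$ attains a positive minimum on the annulus $r\leqslant\norm z_2\leqslant R$. Let $c$ be the smaller of $1$ and twice that minimum. Then $2(1-\Phi(z))\geqslant c$ whenever $\norm z_2\geqslant r$.

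Combining the two regimes yields \eqref{eq:kernel-bounds}. The constant $c$ may depend on $n$ and $\eps$, which the statement allows.
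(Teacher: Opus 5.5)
Your proof is correct. It follows the paper's three-part structure: the product formula, the global upper bound from $2(1-\cos s)\leqslant s^2$ together with the second-moment identity on $Q$, and a decay-plus-compactness argument for $\norm{z}_2$ bounded away from $0$. You spell out that last argument more fully than the paper does, including the explicit sinc decay estimate and the annulus minimum.

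The one genuine difference is the lower bound near the origin.

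\emph{The paper's route.} It uses only continuity at $0$ of $\kappa(t)=2(1-\cos t)/t^2$. It picks $a$ with $\kappa\geqslant(1+\eps)^{-2}$ on $[-a,a]$. Cauchy--Schwarz then gives $|\ip{z}{\xi}|\leqslant\norm{z}_2\sqrt{3n}$ on $Q$, so the integrand is bounded below pointwise, and the second-moment identity finishes. The cost is that the admissible radius $\delta=a/\sqrt{3n}$ shrinks as $n$ grows.

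\emph{Your route.} You average the quartic Taylor bound $1-\cos s\geqslant s^2/2-s^4/24$ and compute fourth moments using the independence and symmetry of the coordinates. This gives the explicit, dimension-free estimate $2(1-\Phi(z))\geqslant\norm{z}_2^2\bigl(1-\norm{z}_2^2/4\bigr)$.

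\emph{Comparison.} The paper's argument needs nothing beyond second moments. Yours costs a small moment computation but gives a quantitative radius independent of $n$. This uniformity is not needed here: in both arguments the constant $c$ comes from compactness and may depend on $n$, and Theorem~\ref{thm:flat-tori-in-translation-quotient} works with fixed $n$ and $A$.
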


\begin{proof}
Fix $\eps>0$. Since $|Q|=(2\sqrt3)^n$, for
$z=(z_1,\ldots,z_n)\in\R^n$ the definition of $\Phi$ gives
\begin{equation}\label{eq:Phi-iterated-integral}
 \Phi(z)=\frac1{(2\sqrt3)^n}
 \int_{-\sqrt3}^{\sqrt3}\!\cdots\!
 \int_{-\sqrt3}^{\sqrt3}
 \cos\left(\sum_{j=1}^n z_j\xi_j\right)
 \,d\xi_1\cdots d\xi_n.
\end{equation}
For $a,b\in\R$, the addition formula for cosine and the oddness of
$t\mapsto\sin(bt)$ give
\begin{align*}
 \frac1{2\sqrt3}\int_{-\sqrt3}^{\sqrt3}\cos(a+bt)\,dt
 &=\frac1{2\sqrt3}\int_{-\sqrt3}^{\sqrt3}
       \bigl(\cos a\cos(bt)-\sin a\sin(bt)\bigr)\,dt\\
 &=\frac{\cos a}{2\sqrt3}
       \int_{-\sqrt3}^{\sqrt3}\cos(bt)\,dt\\
 &=\cos a\,\frac{\sin(\sqrt3 b)}{\sqrt3 b},
\end{align*}
where the final factor $\frac{\sin(\sqrt3 b)}{\sqrt3 b}$ is interpreted as $1$ when $b=0$. Applying this identity
successively to the variables $\xi_n,\xi_{n-1},\ldots,\xi_1$ in
\eqref{eq:Phi-iterated-integral}, and using Fubini's theorem, yields
\begin{equation}\label{eq:Phi-product}
        \Phi(z)=\prod_{j=1}^n
        \frac{\sin(\sqrt3 z_j)}{\sqrt3 z_j},
\end{equation}
with the same convention when $z_j=0$. Since $|\sin t|<|t|$ for $t\neq0$,
we have $\Phi(z)<1$ for every $z\neq0$. Moreover,
$\Phi(z)\to0$ as $\norm{z}_2\to\infty$.

For $t\in\R$, define
\[
 \kappa(t)=
 \begin{cases}
 \dfrac{2(1-\cos t)}{t^2},&t\neq0,\\[6pt]
 1,&t=0.
 \end{cases}
\]
The function $\kappa$ is continuous at $0$. Choose $a>0$ such that
\begin{equation}\label{eq:kappa-lower}
        \kappa(t)\geqslant\frac1{(1+\eps)^2}
        \qquad(|t|\leqslant a),
\end{equation}
and put $\delta=a/\sqrt{3n}$. If $\norm{z}_2\leqslant\delta$ and
$\xi\in Q$, then the Cauchy--Schwarz inequality gives
\[
 |\ip{z}{\xi}|
 \leqslant\norm{z}_2\norm{\xi}_2
 \leqslant\delta\sqrt{3n}
 =a.
\]
Hence
\begin{align*}
 2(1-\Phi(z))
 &=\frac{2}{|Q|}\int_Q
       \bigl(1-\cos\ip{z}{\xi}\bigr)\,d\xi\\
 &=\frac1{|Q|}\int_Q
       \kappa(\ip{z}{\xi})\ip{z}{\xi}^2\,d\xi\\
 &\geqslant\frac1{(1+\eps)^2|Q|}
       \int_Q\ip{z}{\xi}^2\,d\xi
 =\frac{\norm{z}_2^2}{(1+\eps)^2}.
\end{align*}
The last equality follows from
\begin{equation}\label{eq:cube-second-moments}
 \frac1{|Q|}\int_Q\xi_j\xi_k\,d\xi
 =\begin{cases}1,&j=k,\\0,&j\neq k.\end{cases}
\end{equation}
For every $t\in\R$, the inequality $|\sin s|\leqslant|s|$ gives
\[
        2(1-\cos t)=4\sin^2(t/2)\leqslant t^2.
\]
Thus, by \eqref{eq:cube-second-moments},
\begin{equation}\label{eq:kernel-upper}
\begin{aligned}
 2(1-\Phi(z))
 &=\frac1{|Q|}\int_Q 2\bigl(1-\cos\ip{z}{\xi}\bigr)\,d\xi\\
 &\leqslant\frac1{|Q|}\int_Q\ip{z}{\xi}^2\,d\xi\\
 &=\norm{z}_2^2
\end{aligned}
\qquad(z\in\R^n).
\end{equation}

Since $\Phi(z)<1$ for $z\neq0$ and $\Phi(z)\to0$ at infinity, there is a
constant $c>0$ such that
\begin{equation}\label{eq:kernel-away-zero}
        2(1-\Phi(z))\geqslant c
        \qquad(\norm{z}_2\geqslant\delta).
\end{equation}
Together, the preceding estimates prove \eqref{eq:kernel-bounds}.
\end{proof}

We now construct an equivariant map into $\Sphere(\mathcal H_n)$ whose
induced map on the quotient gives the desired embedding of $T_A$.

\begin{thm}\label{thm:flat-tori-in-translation-quotient}
Let $A\in GL_n(\R)$ and let $\eps>0$. There are a $\Z^n$-equivariant map
$\R^n\maps F\Sphere(\mathcal H_n)$ and a number $\lambda>0$ such that the
induced map
\[
 p\colon T_A\longrightarrow\Sphere(\mathcal H_n)/\Z^n,
 \qquad p(\pi(x))=[F(x)],
\]
satisfies
\begin{equation}\label{eq:flat-torus-embedding}
 \lambda\rho_A(x,y)\leqslant d_n(p(x),p(y))
 \leqslant(1+\eps)\lambda\rho_A(x,y)
\end{equation}
for all $x,y\in T_A$. Here $\Z^n$ acts by translation on both $\R^n$ and
$\mathcal H_n$.
\end{thm}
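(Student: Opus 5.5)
The plan is to build $F$ by sampling a single band-limited real function on the integer lattice, so that the inner products $\ip{F(x)}{\tau_uF(y)}$ are \emph{exactly} values of the kernel $\Phi$ of Lemma~\ref{lem:cube-kernel}, evaluated at $\lambda A(x-y-u)$. Concretely, for a real function $\psi\in L^2(\R^n)$ still to be chosen, I set
\[
 F(x)(v)=\psi(x-v)\qquad(x\in\R^n,\ v\in\Z^n).
\]
Equivariance is automatic: $F(x+u)(v)=\psi(x+u-v)=F(x)(v-u)=(\tau_uF(x))(v)$. Moreover $\ip{F(x)}{\tau_uF(y)}=\sum_{v\in\Z^n}\psi(x-v)\psi(y+u-v)$.

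To choose $\psi$, I fix a parameter $\lambda>0$, to be taken small, and let $E=\{\lambda A^{T}\eta/(2\pi):\eta\in Q\}$. I take $\psi$ to be the real, even function whose Fourier transform is $\hat\psi=|E|^{-1/2}\mathbf 1_E$; it is real because $E=-E$.

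I then compute the lattice sum by Poisson summation, or equivalently Parseval on $\T^n$. Since $\hat\psi$ has compact support, $\psi$ is smooth and band-limited. The product $t\mapsto\psi(x-t)\psi(w-t)$ has Fourier support in $E+E$. Once $\lambda$ is so small that $E+E$ lies in the open ball of radius $1$, the only dual lattice point $k\in\Z^n$ meeting this support is $k=0$. Hence the lattice sum equals the integral:
\[
 \sum_{v}\psi(x-v)\psi(w-v)=\int_{\R^n}\psi(x-t)\psi(w-t)\,dt=\int|\hat\psi(\xi)|^2e^{2\pi i\ip{x-w}{\xi}}\,d\xi .
\]
Substituting $\xi=\lambda A^T\eta/(2\pi)$, the right-hand side becomes $\frac1{|Q|}\int_Q\cos\ip{\lambda A(x-w)}{\eta}\,d\eta=\Phi(\lambda A(x-w))$. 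In particular $\norm{F(x)}^2=\Phi(0)=1$, so $F$ maps into $\Sphere(\mathcal H_n)$. I expect this step to be the main obstacle: one must check that the sum converges absolutely, by the decay of $\psi$ or directly in $\ell^2$, and state cleanly the no-aliasing condition on $\lambda$. I would do this by viewing $v\mapsto\psi(x-v)$ as Fourier coefficients of a function supported in a small neighbourhood of $0$ in $\T^n$, as the paper's introduction suggests, and then applying Parseval.

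By \eqref{eq:sphere-correlation}, and since $\Phi$ is continuous and vanishes at infinity (so the supremum is attained), we get
\[
 d_n(p(\pi x),p(\pi y))^2=\min_{u\in\Z^n}2\bigl(1-\Phi(\lambda A(x-y-u))\bigr).
\]
The upper bound in \eqref{eq:kernel-bounds} gives $d_n\leqslant\lambda\rho_A$. The lower bound gives $d_n^2\geqslant\min\{\lambda^2\rho_A^2/(1+\eps)^2,c\}$, where $c$ depends only on $\eps$ and $n$, not on $\lambda$. Let $D$ be the diameter of $T_A$, which is finite since $T_A$ is compact. Shrinking $\lambda$ further so that $\lambda^2D^2\leqslant c$, the minimum is always the first term, so $d_n\geqslant\lambda\rho_A/(1+\eps)$. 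Renaming $\lambda/(1+\eps)$ as $\lambda$ yields \eqref{eq:flat-torus-embedding}.
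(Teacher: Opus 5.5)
Your proposal is correct and is essentially the paper's proof. Your $F(x)(v)=\psi(x-v)$ with $\hat\psi=|E|^{-1/2}\mathbf 1_E$ is exactly the paper's $F$ with $P=E$ and $R=1/\lambda$. The paper also treats $v\mapsto F(x)(v)$ as Fourier coefficients of a function supported in $P\subseteq[-\tfrac12,\tfrac12)^n$ (its no-aliasing condition), obtains $\norm{F(x)-F(y)}^2=2\bigl(1-\Phi(A(x-y)/R)\bigr)$ by Parseval, and then picks $R$ with $d^2/((1+\eps)^2R^2)\leqslant c$ just as you shrink $\lambda$. The only differences are cosmetic: you compute the correlations $\ip{F(x)}{\tau_uF(y)}$ rather than distances, and you phrase the lattice sum as Poisson summation for a band-limited function.
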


\begin{proof}
Let $Q$ and $\Phi$ be as in Lemma~\ref{lem:cube-kernel}, and choose
$c>0$ such that \eqref{eq:kernel-bounds} holds for the given $\eps$.
Let $d=\operatorname{diam}(T_A)$, which is finite since $T_A$ is compact.
Put $C=[-\tfrac12,\tfrac12)^n$ and choose $R>0$ sufficiently large that
\begin{equation}\label{eq:R-choice}
 \frac1{2\pi R}A^TQ\subseteq C
 \qquad\text{and}\qquad
 \frac{d^2}{(1+\eps)^2R^2}\leqslant c,
\end{equation}
and put
\[
        P=\frac1{2\pi R}A^TQ.
\]
For $x\in\R^n$ and $v\in\Z^n$, define
\begin{equation}\label{eq:lift-coordinates}
 F(x)(v)=|P|^{-1/2}\int_P e^{2\pi i\ip{v-x}{\xi}}\,d\xi,
\end{equation}
where $|P|$ is the Lebesgue measure of $P$. Since $P=-P$, the sine term below 
integrates to zero, and hence
\begin{align*}
 F(x)(v)
 &=|P|^{-1/2}\Bigl(
   \int_P\cos\bigl(2\pi\ip{v-x}{\xi}\bigr)\,d\xi
   +i\int_P\sin\bigl(2\pi\ip{v-x}{\xi}\bigr)\,d\xi
   \Bigr)\\
 &=|P|^{-1/2}\int_P\cos\bigl(2\pi\ip{v-x}{\xi}\bigr)\,d\xi
 \in\R.
\end{align*}

We use Parseval's identity to compute the norm of the real sequence $F(x)$.
For this, equip $L^2(C;\C)$ with Lebesgue measure and the inner product
\[
 \ip{h}{k}_{L^2(C)}=\int_C h(\xi)\overline{k(\xi)}\,d\xi.
\]
The standard Fourier basis of this space is
\[
 e_v(\xi)=e^{-2\pi i\ip{v}{\xi}},\qquad v\in\Z^n.
\]
These functions form a complete orthonormal system; no normalizing factor
is needed since $|C|=1$. For each $x\in\R^n$, define the auxiliary function
\[
 h_x(\xi)=|P|^{-1/2}\mathbf1_P(\xi)e^{-2\pi i\ip{x}{\xi}},
 \qquad \xi\in C.
\]
Since $P\subseteq C$, this is an $L^2$ function with
$\int_C|h_x(\xi)|^2\,d\xi=|P|^{-1}|P|=1$.
Its Fourier coefficient at $e_v$ is exactly
\[
 \ip{h_x}{e_v}_{L^2(C)}
 =|P|^{-1/2}\int_P
   e^{-2\pi i\ip{x}{\xi}}e^{2\pi i\ip{v}{\xi}}\,d\xi
 =F(x)(v).
\]
Parseval's identity therefore gives
\[
 \sum_{v\in\Z^n}|F(x)(v)|^2
 =\sum_{v\in\Z^n}|\ip{h_x}{e_v}_{L^2(C)}|^2
 =\int_C|h_x(\xi)|^2\,d\xi=1.
\]
Thus we have a map
\[
        F\colon\R^n\longrightarrow\Sphere(\mathcal H_n).
\]
For every $u,v\in\Z^n$, the defining formula gives
\[
        F(x+u)(v)=F(x)(v-u).
\]
Consequently,
\begin{equation}\label{eq:lift-equivariance}
        F(x+u)=\tau_uF(x),
\end{equation}
so $F$ is equivariant.

For $x,y\in\R^n$, linearity gives
\[
 \ip{h_x-h_y}{e_v}_{L^2(C)}=F(x)(v)-F(y)(v).
\]
Thus Parseval's identity applied to $h_x-h_y$ gives
\begin{align*}
 \norm{F(x)-F(y)}_2^2
 &=\sum_{v\in\Z^n}|F(x)(v)-F(y)(v)|^2\\
 &=\int_C|h_x(\xi)-h_y(\xi)|^2\,d\xi\\
 &=\frac1{|P|}\int_P
   \left|e^{-2\pi i\ip{x}{\xi}}-e^{-2\pi i\ip{y}{\xi}}\right|^2\,d\xi\\
 &=\frac2{|P|}\int_P
   \bigl(1-\cos(2\pi\ip{x-y}{\xi})\bigr)\,d\xi.
\end{align*}
The last equality uses $|e^{ia}-e^{ib}|^2=2(1-\cos(a-b))$.
Under the change of variables $\xi=(2\pi R)^{-1}A^Tz$, we have
$d\xi/|P|=dz/|Q|$ and
$2\pi\ip{x-y}{\xi}=\ip{A(x-y)/R}{z}$. Hence
\begin{equation}\label{eq:lift-norm-distance}
 \norm{F(x)-F(y)}_2^2
 =2\left(1-\Phi\left(\frac{A(x-y)}R\right)\right).
\end{equation}

By equivariance, $F$ induces a well-defined map
\[
 p\colon T_A\longrightarrow\Sphere(\mathcal H_n)/\Z^n,
 \qquad p(\pi(x))=[F(x)].
\]
Fix $x,y\in T_A$ and choose lifts $\widetilde x,\widetilde y\in\R^n$
with $\pi(\widetilde x)=x$ and $\pi(\widetilde y)=y$.
Equations \eqref{eq:sigma-metric}, \eqref{eq:lift-equivariance}, and
\eqref{eq:lift-norm-distance} give
\begin{equation}\label{eq:quotient-distance}
\begin{aligned}
 d_n(p(x),p(y))^2
 &=\inf_{u\in\Z^n}\norm{F(\widetilde x)-F(\widetilde y+u)}_2^2\\
 &=\inf_{u\in\Z^n}
 2\left(1-\Phi\left(
       \frac{A(\widetilde x-\widetilde y-u)}R\right)\right).
\end{aligned}
\end{equation}
For every $u\in\Z^n$,
\[
 \norm{A(\widetilde x-\widetilde y-u)}_2\geqslant\rho_A(x,y).
\]
Since $\rho_A(x,y)\leqslant d$, the lower estimate in
\eqref{eq:kernel-bounds} and the choice \eqref{eq:R-choice} show that every
term in the infimum in \eqref{eq:quotient-distance} is at least
\[
        \frac{\rho_A(x,y)^2}{(1+\eps)^2R^2}.
\]
On the other hand, choose $u_0\in\Z^n$ realizing the minimum in
\eqref{eq:flat-metric}. The upper estimate in \eqref{eq:kernel-bounds} gives
\[
 d_n(p(x),p(y))^2\leqslant\frac{\rho_A(x,y)^2}{R^2}.
\]
Consequently,
\[
 \frac1{(1+\eps)R}\rho_A(x,y)
 \leqslant d_n(p(x),p(y))
 \leqslant\frac1R\rho_A(x,y).
\]
Taking $\lambda=((1+\eps)R)^{-1}$ proves the theorem.
\end{proof}


\section{Reduction to the bilateral shift}\label{sec:bilateral-shift}
The next lemma shows that every finite subset of a higher-rank translation
quotient can be represented, with arbitrarily small bilipschitz error, in the
quotient by the bilateral shift.

\begin{prop}\label{prop:finite-coding}
Let $n\geqslant 1$, let $F\subseteq{\Sphere(\mathcal H_n)/\Z^n}$ be finite, and let $\eps>0$.
There are a map $F\maps q\Sphere(\mathcal H_1)/\Z$ and a number $\lambda>0$ such that
\begin{equation}\label{eq:finite-coding-distortion}
 \lambda d_n(x,y)\leqslant d_1(q(x),q(y))
 \leqslant(1+\eps)\lambda d_n(x,y)
\end{equation}
for all $x,y\in F$.
\end{prop}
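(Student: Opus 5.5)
The approach is to reduce to finitely supported representatives, where a linear coding of $\Z^n$ into $\Z$ preserves the quotient distances exactly, and then to pass to the general case by approximation. In fact I expect we may take $\lambda=1$.

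\emph{Step 1 (approximation).} Fix representatives $f_x\in\Sphere(\mathcal H_n)$ of the points $x\in F$. Let $\delta>0$ be smaller than $\eps'\min\{d_n(x,y):x\neq y\in F\}$ for a small $\eps'$ to be chosen; this minimum is positive because $F$ is finite and $d_n$ is a metric. Choose finitely supported $g_x\in\Sphere(\mathcal H_n)$ with $\norm{f_x-g_x}\leqslant\delta$, for instance by truncating $f_x$ and renormalizing. By the triangle inequality in the quotient, $|d_n([g_x],[g_y])-d_n(x,y)|\leqslant2\delta$. Hence distances between the $[g_x]$ agree with those in $F$ up to a multiplicative factor $1\pm2\eps'$.

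\emph{Step 2 (coding).} All the supports of the $g_x$ lie in a box $B=\{0,\dots,N-1\}^n$, after translating by a common vector if necessary. Take $M\geqslant 2N$ and define the injective homomorphism
\[
\theta(v)=v_1+v_2M+\dots+v_nM^{n-1}
\]
from $\Z^n$ to $\Z$. Set $q(x)=[g_x\circ\theta^{-1}]$, extended by zero off $\theta(\Z^n)$. This is isometric on representatives and satisfies $\tau_{\theta(u)}(g\circ\theta^{-1})=(\tau_ug)\circ\theta^{-1}$.

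The key claim is that for $g,h$ supported in $B$,
\[
\sup_{m\in\Z}\ip{g\circ\theta^{-1}}{\tau_m(h\circ\theta^{-1})}=\sup_{u\in\Z^n}\ip{g}{\tau_uh}.
\]
By \eqref{eq:sphere-correlation}, this gives $d_1(q(x),q(y))=d_n([g_x],[g_y])$.

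To prove the claim, note that the inequality $\geqslant$ is clear by taking $m=\theta(u)$. For $\leqslant$, a shift $m$ contributes a nonzero inner product only if $m=\theta(a)-\theta(b)$ for some $a,b\in B$. Such an $m$ equals $\theta(a-b)$ with $a-b\in(-N,N)^n$, and within $B$ the difference $\theta(a')-\theta(b')$ determines $a'-b'$ because $M\geqslant 2N$ (uniqueness of balanced base-$M$ digits). So every pair of support points aligned by $m$ is aligned by the single translation $u=a-b$. Therefore $\ip{g\circ\theta^{-1}}{\tau_m(h\circ\theta^{-1})}=\ip{g}{\tau_uh}$, and shifts $m$ outside $\theta(B-B)$ give the value $0$.

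Zero terms need no separate comparison. Since $\Z^n$ is infinite and the supports are finite, some translate $u$ makes the supports of $g$ and $\tau_uh$ disjoint, so $\sup_u\ip{g}{\tau_uh}\geqslant0$ already.

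\emph{Step 3 (conclusion).} Combining the two steps gives
\[
(1-2\eps')d_n(x,y)\leqslant d_1(q(x),q(y))\leqslant(1+2\eps')d_n(x,y).
\]
Choose $\lambda=1-2\eps'$ and $\eps'$ small enough that $(1+2\eps')/(1-2\eps')\leqslant1+\eps$.

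The main obstacle is Step 2, namely verifying that the coding introduces no spurious alignments of supports. The choice $M\geqslant2N$ together with uniqueness of base-$M$ expansions having digits in $(-M/2,M/2)$ is what handles this.
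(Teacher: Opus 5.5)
Your proposal follows the paper's proof. You first reduce to finitely supported representatives, using density of such unit vectors and the estimate $|d_n([f],[g])-d_n([f'],[g'])|\leqslant\norm{f-f'}+\norm{g-g'}$. You then code $\Z^n$ into $\Z$ by a mixed-radix homomorphism that is injective on the difference set of the supports. Your $B-B$ with $M\geqslant2N$ plays the role of the paper's $D=K-K\subseteq[-r,r]^n$ with $s>2r$.

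One statement is false as written and needs correcting. For $n\geqslant2$ no homomorphism $\Z^n\to\Z$ is injective; for example $\theta(M,-1,0,\dots,0)=0$. Also $\theta(\Z^n)=\Z$, so ``extended by zero off $\theta(\Z^n)$'' says nothing. The coding must be read as $g\circ(\theta|_B)^{-1}$, extended by zero off $\theta(B)$.

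This also affects your inequality $\geqslant$, which is not ``clear by taking $m=\theta(u)$'' for every $u$. Take $u=(M,-1,0,\dots,0)$. The coded correlation at $m=\theta(u)=0$ is $\ip{g}{h}$, while $\ip{g}{\tau_uh}=0$. So the coded inner product at $\theta(u)$ need not equal $\ip{g}{\tau_uh}$.

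The repair is the paper's case split, and it uses only facts you already prove:
\begin{itemize}
\item For $u\in B-B$, your digit-uniqueness argument gives equality of the two correlations at $m=\theta(u)$.
\item For $u\notin B-B$, one has $\ip{g}{\tau_uh}=0$. This is dominated by the supremum over $m$, because every shift $m\notin\theta(B)-\theta(B)$ gives coded correlation $0$.
\end{itemize}
With this fix the argument is complete.
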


\begin{proof}
Finitely supported unit vectors are dense in $\Sphere(\mathcal H_n)$.
Moreover, for $f,g,f',g'\in\Sphere(\mathcal H_n)$,
\[
 \left|d_n([f],[g])-d_n([f'],[g'])\right|
 \leqslant \norm{f-f'}+\norm{g-g'}.
\]
Since $F$ is finite, its nonzero pairwise distances have a positive
lower bound. Thus sufficiently close approximations of its
representatives by finitely supported unit vectors change all
nonzero quotient distances by arbitrarily small relative amounts.
It therefore suffices to prove that these distances can be preserved
exactly when every point of $F$ has a finitely supported unit-vector
representative. Enumerate
\[
        F=\{[f_1],\ldots,[f_m]\}
\]
and choose a finite set $K\subseteq\Z^n$ containing $0$ and the support of
each $f_i$. Put
\[
        D=K-K.
\]
Choose integers $r\geqslant1$ and $s>2r$ such that
$D\subseteq[-r,r]^n$, and define a homomorphism
$\varphi\colon\Z^n\to\Z$ by
\begin{equation}\label{eq:mixed-radix-map}
        \varphi(u_1,\ldots,u_n)=\sum_{j=1}^n s^{j-1}u_j.
\end{equation}

We claim that $\varphi$ is injective on $D$. Suppose that $u,v\in D$ and
$\varphi(u)=\varphi(v)$. Setting $w=u-v$, we have
\[
        \sum_{j=1}^n s^{j-1}w_j=0
        \qquad\text{and}\qquad |w_j|\leqslant2r<s.
\]
Reduction modulo $s$ gives $w_1=0$. After dividing by $s$, the same argument
gives $w_2=0$, and repetition gives $w=0$. Thus $u=v$. Since $0\in K$,
we have $K\subseteq D$, so $\varphi$ is also injective on $K$.

Let $\ell^2(K;\R)$ be the subspace of $\mathcal H_n$ consisting of functions
supported on $K$. Define a real-linear isometry
\[
        J\colon\ell^2(K;\R)\longrightarrow\mathcal H_1
\]
by
\begin{equation}\label{eq:coding-isometry}
 (Jf)(\varphi(u))=f(u)\quad(u\in K),
 \qquad (Jf)(t)=0\quad(t\notin\varphi[K]).
\end{equation}
Fix $1\leqslant i,j\leqslant m$. If $t\notin\varphi[D]$, then
$\supp(Jf_i)$ and $\supp(\tau_tJf_j)$ are disjoint, and hence
\begin{equation}\label{eq:coded-zero-correlation}
        \ip{Jf_i}{\tau_tJf_j}=0.
\end{equation}
If $w\in D$, then
\begin{align*}
 \ip{Jf_i}{\tau_{\varphi(w)}Jf_j}
 &=\sum_{a\in\Z}(Jf_i)(a)(Jf_j)(a-\varphi(w))\\
 &=\sum_{\substack{u,v\in K\\
          \varphi(u)-\varphi(v)=\varphi(w)}}f_i(u)f_j(v).
\end{align*}
For $u,v\in K$, the difference $u-v$ belongs to $D$. Since $\varphi$ is
injective on $D$, the equality
$\varphi(u)-\varphi(v)=\varphi(w)$ is equivalent to $u-v=w$. Therefore
\begin{equation}\label{eq:coded-correlations}
 \ip{Jf_i}{\tau_{\varphi(w)}Jf_j}
 =\sum_{u\in\Z^n}f_i(u)f_j(u-w)
 =\ip{f_i}{\tau_wf_j}.
\end{equation}
If $w\notin D$, then $\supp(f_i)$ and $\supp(\tau_wf_j)$ are disjoint, so
$\ip{f_i}{\tau_wf_j}=0$. Equations
\eqref{eq:coded-zero-correlation} and \eqref{eq:coded-correlations} now imply
\[
 \sup_{t\in\Z}\ip{Jf_i}{\tau_tJf_j}
 =\max\left\{0,\max_{w\in D}\ip{f_i}{\tau_wf_j}\right\}
 =\sup_{w\in\Z^n}\ip{f_i}{\tau_wf_j}.
\]
It follows from \eqref{eq:sphere-correlation} that
\begin{equation}\label{eq:exact-coded-distances}
 d_1([Jf_i],[Jf_j])
 =d_n([f_i],[f_j])
 \qquad(1\leqslant i,j\leqslant m).
\end{equation}
Thus the assertion holds with $\lambda=1$ in the finitely supported case.
\end{proof}

\begin{cor}\label{cor:flat-tori-in-sigma-one}
Let $A\in GL_n(\R)$, let $E\subseteq T_A$ be finite, and let $\eps>0$. There
are a map $E\maps q\Sphere(\mathcal H_1)/\Z$ and a number $\lambda>0$ such that
\[
 \lambda\rho_A(x,y)\leqslant d_1(q(x),q(y))
 \leqslant (1+\eps)\lambda\rho_A(x,y)
\]
for all $x,y\in E$.
\end{cor}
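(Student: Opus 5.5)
The plan is to compose the two embeddings already constructed: Theorem~\ref{thm:flat-tori-in-translation-quotient} and Proposition~\ref{prop:finite-coding}. Fix $\eps>0$ and choose $\eps'>0$ with $(1+\eps')^2\leqslant 1+\eps$. First apply Theorem~\ref{thm:flat-tori-in-translation-quotient} to $A$ with $\eps'$. This gives a map $p\colon T_A\to\Sphere(\mathcal H_n)/\Z^n$ and a number $\lambda_1>0$ such that
\[
\lambda_1\rho_A(x,y)\leqslant d_n(p(x),p(y))\leqslant(1+\eps')\lambda_1\rho_A(x,y)
\]
for all $x,y\in T_A$, and in particular for $x,y\in E$.

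Next, the image $p[E]$ is a finite subset of $\Sphere(\mathcal H_n)/\Z^n$. Apply Proposition~\ref{prop:finite-coding} to $p[E]$ with $\eps'$. This gives a map $q_0\colon p[E]\to\Sphere(\mathcal H_1)/\Z$ and a number $\lambda_2>0$ such that
\[
\lambda_2 d_n(a,b)\leqslant d_1(q_0(a),q_0(b))\leqslant(1+\eps')\lambda_2 d_n(a,b)
\]
for all $a,b\in p[E]$.

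Set $q=q_0\circ p|_E$ and $\lambda=\lambda_1\lambda_2$. Chaining the two displayed inequalities gives
\[
\lambda\rho_A(x,y)\leqslant d_1(q(x),q(y))\leqslant(1+\eps')^2\lambda\rho_A(x,y)\leqslant(1+\eps)\lambda\rho_A(x,y),
\]
which is the claim.

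The only point needing care is that $q_0$ is defined on the set $p[E]$, not on $E$. By the lower bound in the first display, $p$ is injective on $T_A$, so distinct points of $E$ have distinct images and the composition has no ambiguity. Even without injectivity, composing with $p|_E$ would still be well defined. Nothing else is an obstacle: the argument is just bookkeeping with constants.
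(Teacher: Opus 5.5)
Your proposal is correct and is essentially the paper's argument. The paper states this corollary without a separate proof, as the immediate composition of Theorem~\ref{thm:flat-tori-in-translation-quotient} with Proposition~\ref{prop:finite-coding} applied to the finite set $p[E]$, and your choice of $\eps'$ with $(1+\eps')^2\leqslant 1+\eps$ together with $\lambda=\lambda_1\lambda_2$ handles the bookkeeping correctly.
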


\begin{proof}[Proof of Theorem \ref{thm:main}]
Suppose, towards a contradiction, that
\[
        D=c_2\big(\Sphere(\mathcal H_1)/\Z\big)<\infty.
\]
Let $A\in GL_n(\R)$ and let $E\subseteq T_A$ be finite. For every $\eps>0$,
Corollary \ref{cor:flat-tori-in-sigma-one} represents $E$, after rescaling, by
a finite subset $F\subseteq\Sphere(\mathcal H_1)/\Z$ with distortion at most $1+\eps$. Hence
\[
        c_2(E)\leqslant (1+\eps)c_2(F)
        \leqslant (1+\eps)D.
\]
Since $\eps>0$ is arbitrary, $c_2(E)\leqslant D$. Taking the supremum over all
finite $E\subseteq T_A$ and using \eqref{eq:finite-determination}, we obtain
\[
        c_2(T_A)\leqslant D
\]
for every $n\geqslant 1$ and every $A\in GL_n(\R)$. This contradicts Theorem
\ref{thm:khot-naor}, which gives such tori with arbitrarily large Hilbert
distortion. Therefore $c_2(\Sphere(\mathcal H_1)/\Z)=\infty$.
\end{proof}

\begin{rem}\label{rem:other-translation-quotients}
Theorem~\ref{thm:main} also gives nonembeddability results for other
translation quotients. By \cite{CahillIversonMixon}*{Theorem 44 and Example 47},
for every $d\geqslant1$, the Hilbert distortion of each of the spaces
\[
 L^2(\R^d;\R)/\R^d
 \qquad\text{and}\qquad
 L^2(\T^d;\R)/\T^d
\]
is bounded below by $c_2\big(\mathcal H/\Z\big)$ and is therefore
infinite. Here each group acts by translation, and each orbit space is
equipped with its quotient metric.
\end{rem}


\section*{AI statement and Lean verification}
An original version of the argument was found in May 2026 using OpenAI's
ChatGPT~5.5. The present manuscript represents a substantially reworked version of the original argument 
prepared by the authors with the aid of later versions of ChatGPT. The authors are fully responsible for
the content of the paper.

A Lean formalized proof of Theorem~\ref{thm:main} is available at 
\begin{center}
 \url{https://github.com/adas1236/SphereObstructionHilbertShiftQuotient}
 \end{center} 
The file \texttt{SphereObstructionHilbertShiftQuotient.lean} contains the formalizations of Theorem~\ref{thm:main}. While \texttt{SphereObstructionHilbertShiftQuotient.lean} aims to have clean definitions and theorem statements, there is no guarantee on the quality of other Lean files within the repository.
The formalization utilized OpenAI's Codex agent and GPT-5.5 Pro throughout the formalization process. The authors have checked the Lean statements and verified they accurately represent the content within this paper.

\enlargethispage{4\baselineskip}

\end{document}